\documentclass[11pt,twoside]{article}

\usepackage{amssymb}
\usepackage{amsmath}
\usepackage{amsthm}
\usepackage{color}
\usepackage{colortbl,dcolumn}

\allowdisplaybreaks

\textwidth=15cm
\textheight=21cm
\oddsidemargin 0.25cm
\evensidemargin 0.25cm

\parindent=13pt

\def\rr{{\mathbb R}}

\def\rn{{{\rr}^n}}

\def\zz{{\mathbb Z}}
\def\nn{{\mathbb N}}

\def\cc{{\mathbb C}}

\def\cx{{\mathcal X}}

\def\fz{\infty}
\def\az{\alpha}
\def\supp{{\mathop\mathrm{\,supp\,}}}

\def\loc{{\mathop\mathrm{\,loc\,}}}

\def\lz{\lambda}
\def\dz{\delta}

\def\ez{\epsilon}

\def\kz{\kappa}
\def\bz{\beta}
\def\fai{\varphi}
\def\gz{{\gamma}}
\def\oz{{\omega}}

\def\wz{\widetilde}

\def\hs{\hspace{0.3cm}}

\def\ls{\lesssim}

\def\rbmo{{\mathop\mathrm{RBMO}}}
\def\rbmop{{{\mathop\mathrm{RBMO}^p_\eta}(\mu)}}

\def\lp{{L^p(\mu)}}

\def\dint{\displaystyle\int}

\def\gfz{\genfrac{}{}{0pt}{}}

\def\dfrac{\displaystyle\frac}

\def\r{\right}
\def\lf{\left}

\newtheorem{thm}{Theorem}[section]
\newtheorem{lem}{Lemma}[section]
\newtheorem{prop}{Proposition}[section]
\newtheorem{rem}{Remark}[section]

\newtheorem{defn}{Definition}[section]

\numberwithin{equation}{section}

\begin{document}

\arraycolsep=1pt

\title{\Large\bf An Interpolation Theorem for Sublinear Operators
on Non-homogeneous Metric Measure Spaces\footnotetext{\hspace{-0.35cm} 2010 {\it Mathematics Subject
Classification}. {Primary 42B35; Secondary 42B25, 47B38.}
\endgraf{\it Key words and phrases.} upper doubling, geometrically
doubling, ${\mathop\mathrm{RBMO}}(\mu)$, sublinear, interpolation.}}
\author{Haibo Lin and Dongyong Yang\,\footnote{Corresponding
author.}}
\date{ }
\maketitle

\begin{center}
\begin{minipage}{13.6cm}\small
{\noindent{\bf Abstract.} Let $({\mathcal X}, d, \mu)$ be a metric
measure space and satisfy the so-called upper doubling condition and
the geometrically doubling condition. In this paper, the authors
establish an interpolation result that a sublinear operator which is
bounded from the Hardy space $H^1(\mu)$ to $L^{1,\,\infty}(\mu)$ and from
$L^\infty(\mu)$ to the BMO-type space ${\mathop\mathrm{RBMO}}(\mu)$ is also bounded on
$L^p(\mu)$ for all $p\in(1,\,\fz)$. This extension is not completely
straightforward and improves the existing result.}
\end{minipage}
\end{center}

\section{Introduction\label{s1}}

\hskip\parindent Spaces of homogeneous type were introduced by
Coifman and Weiss \cite{cw71} as a general framework in which many
results from real and harmonic analysis on Euclidean spaces have
their natural extensions; see, for example, \cite{cw77,hk,he}.
Recall that a metric space $(\cx,\,d)$ equipped with a nonnegative
Borel measure $\mu$ is called a {\it space of homogeneous type} if
$(\cx,\,d,\,\mu)$ satisfies the following {\it measure doubling
condition} that there exists a positive constant $C_\mu$, depending
on $\mu$, such that for any ball
$B(x,\,r)\equiv\{y\in\cx:\,d(x,\,y)<r\}$ with $x\in\cx$ and
$r\in(0,\,\fz)$,
\begin{equation}\label{e1.1}
0<\mu(B(x,\,2r))\le C_\mu\mu(B(x,\,r)).
\end{equation}

The measure doubling condition \eqref{e1.1} plays a key role in the
classical theory of Calder\'on-Zygmund operators. However, recently,
many classical results concerning the theory of Calder\'on-Zygmund
operators and function spaces have been proved still valid if the
doubling condition is replaced by a less demanding condition such as
the polynomial growth condition; see, for example
\cite{mmno,t01ma,t01am,ntv,t03} and the references therein. In
particular, let $\mu$ be a non-negative Radon measure on $\rn$ which
only satisfies the {\it polynomial growth condition} that there
exist positive constants $C$ and $\kz\in(0, n]$ such that for all
$x\in\rn$ and $r\in(0,\,\fz)$,
\begin{equation}\label{e1.2}
\mu(\{y\in\rn:\,|x-y|<r\})\le C r^\kz.
\end{equation}
Such a measure does not need to satisfy the doubling condition
\eqref{e1.1}. We mention that the analysis with non-doubling
measures played a striking role in solving the long-standing open
Painlev\'e's problem by Tolsa in \cite{t03}.

Because measures satisfying \eqref{e1.2} are only different, not
more general than measures satisfying \eqref{e1.1}, the
Calder\'on-Zygmund theory with non-doubling measures is not in all
respects a generalization of the corresponding theory of spaces of
homogeneous type. To include the spaces of homogeneous type and
Euclidean spaces with a non-negative Radon measure satisfying a
polynomial growth condition, Hyt\"onen \cite{h10} introduced a new
class of metric measure spaces which satisfy the so-called upper
doubling condition and the geometrically doubling condition (see,
respectively, Definitions \ref{d2.1} and \ref{d2.2} below), and a
notion of the regularized BMO space, namely, $\rbmo(\mu)$ (see
Definition \ref{d2.4} below). Since then, more and more papers focus
on this new class of spaces; see, for example
\cite{hm,ly11,hyy10,ad10,hlyy,hmy,lyy}.

Let $(\cx,\,d,\,\mu)$ be a metric measure space satisfying the upper
doubling condition and the geometrically doubling condition. In
\cite{hyy10}, the atomic Hardy space $H^1(\mu)$ (see Definition
\ref{d2.5} below) was studied and the duality between $H^1(\mu)$ and
$\rbmo(\mu)$ of Hyt\"onen was established. Some of results in
\cite{hyy10} were also independently obtained by Anh and Duong
\cite{ad10} via different approaches. Moreover, Anh and Duong
\cite[Theorem 6.4]{ad10} established an interpolation result that a
linear operator which is bounded from $H^1(\mu)$ to $L^1(\mu)$ and
from $L^\infty(\mu)$ to ${\mathop\mathrm{RBMO}}(\mu)$ is also
bounded on $L^p(\mu)$ for all $p\in(1,\,\fz)$.
The purpose of this paper is to generalize and improve the
interpolation result for
 linear operators in \cite{ad10} to  sublinear operators in the current setting
$(\cx,\,d,\,\mu)$, which is stated as follows.

\begin{thm}\label{t1.1} Let $T$ be a sublinear operator that is
bounded from $L^\fz(\mu)$ to $\rbmo(\mu)$ and from $H^1(\mu)$ to
$L^{1,\,\fz}(\mu)$. Then $T$ extends boundedly to $L^p(\mu)$ for
every $p\in(1,\,\fz)$.
\end{thm}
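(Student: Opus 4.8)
The plan is to prove this via a Calderón–Zygmund decomposition and duality/interpolation argument. The two endpoint hypotheses—boundedness from $H^1(\mu)$ to $L^{1,\infty}(\mu)$ and from $L^\infty(\mu)$ to $\rbmo(\mu)$—are each weaker than the hypotheses in the linear case of Anh–Duong, so the key conceptual difficulty is that we cannot directly interpolate using a bounded $L^1$ or $L^\infty$ bound, nor can we invoke linearity to pass to adjoints. I would split the argument into two regimes. For the "upper" range, one hopes to use the $L^\infty \to \rbmo$ bound together with a sharp-maximal-function technique: **First** I would establish, or recall, a good control of the $\rbmo$-sharp maximal operator $M^\sharp$ on $L^p(\mu)$ for $p\in(1,\infty)$, i.e. a Fefferman–Stein type inequality $\|f\|_{L^p(\mu)} \lesssim \|M^\sharp f\|_{L^p(\mu)}$ valid in this non-homogeneous setting, and then try to dominate $M^\sharp(Tf)$ pointwise or in norm using the endpoint bounds. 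The essential mechanism is that $\rbmo$ is the natural image space of $M^\sharp$, so the $L^\infty\to\rbmo$ hypothesis feeds directly into the sharp-maximal machinery.

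**The core technical step** is a careful Calderón–Zygmund decomposition of a function $f\in L^p(\mu)$, $p\in(1,\infty)$, at a height $\lambda$, writing $f = g + b$ where $g$ is the good part (bounded, with $L^\infty$ norm controlled by $\lambda$ up to the upper-doubling geometry) and $b = \sum_j b_j$ is the bad part, a sum of pieces supported on the decomposition balls/cubes with vanishing integral, each of which is (a multiple of) an $H^1(\mu)$-atom. I would then estimate the distribution function of $Tf$ at level $\lambda$ by splitting $|Tf| \le |Tg| + |Tb|$ (using sublinearity, which gives exactly this triangle-type inequality). The bad part is handled by the $H^1\to L^{1,\infty}$ hypothesis, since $b$, suitably normalized, lies in $H^1(\mu)$ with controlled norm, yielding a weak-$(1,1)$-type bound for $|Tb|$. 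The good part is handled by the $L^\infty\to\rbmo$ hypothesis combined with the sharp maximal estimate: since $\|g\|_{L^\infty(\mu)}\lesssim\lambda$, we get $\|Tg\|_{\rbmo(\mu)}\lesssim\lambda$, and feeding this through $M^\sharp$ and the Fefferman–Stein inequality controls $\|Tg\|_{L^p(\mu)}$.

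**The main obstacle**, and the reason the authors warn that the extension "is not completely straightforward," is the interaction between the weak-type endpoint $L^{1,\infty}(\mu)$ (rather than the strong $L^1(\mu)$ used by Anh–Duong) and sublinearity. Because $T$ is only sublinear, I cannot decompose $T$ additively over the atomic pieces of $b$ and sum the $H^1$ norms against an $L^1$ bound; instead I must run the Calderón–Zygmund argument at each level $\lambda$ and patch the two contributions together via the distributional-inequality formulation of $L^p$ interpolation (a Marcinkiewicz-type patching). The delicate point is to choose the truncation height $\lambda$ as a function of the level in the distribution function so that the good-part and bad-part estimates combine to give a genuine $L^p$ bound rather than merely a weak-type bound; this requires the upper-doubling and geometrically doubling hypotheses to control the overlaps of the decomposition balls and the measure of the exceptional set $\bigcup_j B_j$. **I would** carry this out by proving the weak-type-$(p,p)$ bound first for each $p\in(1,\infty)$ through the level-dependent decomposition, and then upgrade to strong-type $(p,p)$ either by a second interpolation between two nearby weak-type estimates or by an integration-over-$\lambda$ argument using Fubini together with the layer-cake formula.
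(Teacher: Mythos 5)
Your overall architecture---the Calder\'on--Zygmund decomposition $f=g+b$ at each level $\ell$ (the paper's Lemma \ref{l2.1}), the bad part handled by the $H^1(\mu)\to L^{1,\fz}(\mu)$ hypothesis, and Marcinkiewicz patching of weak-type bounds---coincides with the paper's. The genuine gap is in the good-part estimate, which is where all the difficulty sits. From $\|g\|_{L^\fz(\mu)}\ls\ell$ you get $\|Tg\|_{\rbmo(\mu)}\ls\ell$, and you propose to ``feed this through $M^\sharp$ and the Fefferman--Stein inequality'' to control $\|Tg\|_{L^p(\mu)}$. But the only bound this yields on $M^\sharp(Tg)$ is the $L^\fz$ bound $M^\sharp(Tg)\ls\ell$; when $\mu(\cx)=\fz$ this gives no control whatsoever of $\|M^\sharp(Tg)\|_{L^p(\mu)}$, nor of the distribution function of $Tg$: both $\rbmo(\mu)$ and $M^\sharp$ annihilate constants, so a function such as $Tg\equiv 2c_0\ell$ satisfies $\|Tg\|_{\rbmo(\mu)}=0$ and $M^\sharp(Tg)\equiv0$, while $\mu(\{|Tg|>c_0\ell\})=\mu(\cx)=\fz$. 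Hence no Fefferman--Stein statement can convert the $\rbmo$ bound alone into the bound $\mu(\{|Tg|>c_0\ell\})\ls\ell^{-p}\|f\|_{L^p(\mu)}^p$ that you need. What makes this step work in the paper is the weak-type good-$\lambda$ comparison between the \emph{doubling maximal function} $N(Tg)$ and $M^\sharp(Tg)$ (Lemma \ref{l3.2}), whose a priori hypothesis $\sup_{0<t<R}t^p\mu(\{N>t\})<\fz$ is precisely what rules out the constant counterexample; that hypothesis is verified by observing that $g=f-h\in H^1(\mu)$ (both $f$ and $h$ are in $H^1(\mu)$) and invoking the $H^1(\mu)\to L^{1,\fz}(\mu)$ bound together with Lemma \ref{l3.3}. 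None of this appears in your proposal, and without it the good-part estimate is false as stated.

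Second, you misidentify the principal obstacle, and so miss the paper's key device. The problem is not summing $T$ over the atomic pieces of $b$: the whole function $b$ lies in $H^1(\mu)$ with $\|b\|_{H^1(\mu)}\ls\ell^{1-p}\|f\|_{L^p(\mu)}^p$, so the weak-type hypothesis applies to $Tb$ in one stroke, exactly as in the linear case. The real obstruction (stated in the paper's introduction) is that for sublinear $T$ the composition $M^\sharp\circ T$ is not known to be sublinear: $|T(g+h)|\le|Tg|+|Th|$ is only a pointwise bound, and $M^\sharp$ measures oscillation, which is not monotone under pointwise domination, so $M^\sharp(Tf)$ cannot be split as in Anh--Duong's linear proof. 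The paper's way around this is the power trick with $r\in(0,1)$: setting $N_r(F)\equiv[N(|F|^r)]^{1/r}$ and $M^\sharp_r(F)\equiv[M^\sharp(|F|^r)]^{1/r}$, the operator $N_r\circ T$ \emph{is} quasi-sublinear (since $N$ is monotone and $t\mapsto t^r$ is subadditive), Lemma \ref{l3.1} shows $M^\sharp_r$ still maps $\rbmo(\mu)$ into $L^\fz(\mu)$, and the same trick is what allows the weak endpoint $L^{1,\fz}(\mu)$ to be pushed through $N$ (which is not bounded on $L^{1,\fz}(\mu)$) in the bad-part estimate. Your proposal would also need, and lacks, the separate treatment of the case $\mu(\cx)<\fz$, where one must subtract the mean of $|Tg|^r$ before Lemma \ref{l3.2} can be applied.
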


In Section \ref{s2}, we collect preliminaries we need. In Section
\ref{s3}, for $r\in(0, 1)$, we first show that the maximal function $M^\sharp_r(f)$, which is a
variant of the sharp maximal function $M^\sharp(f)$ in \cite{ad10}, is
bounded from $\rbmo(\mu)$ to $L^\fz(\mu)$, then we
establish a weak type estimate between the doubling maximal function $N(f)$ and $M^\sharp(f)$,
and we also establish a weak type estimate for $N_r(f)$ with $r\in(0, 1)$, a variant of $N(f)$.
Using these results we establish Theorem \ref{t1.1}.
We remark that the method for the proof of Theorem \ref{t1.1} is
different from that of \cite[Theorem 6.4]{ad10}.
Precisely, in the proof of \cite[Theorem 6.4]{ad10},
the fact that the composite operator $M^\sharp\circ T$ of the sharp maximal
function $M^\sharp$ and a linear operator $T$ is a sublinear operator was used.
However, as far as we know, when $T$ is sublinear,
whether the composite operator $M^\sharp\circ T$ is a sublinear operator is unclear
and so the proof of \cite[Theorem 6.4]{ad10} is not available.

Throughout this paper, we denote by $C$ a positive constant which is
independent of the main parameters involved but may vary from line
to line. The subscripts of a constant indicate the parameters it
depends on. The notation $f\ls g$ means that there exists a constant
$C>0$ such that $f\le Cg$. Also, for a $\mu$-measurable set $E$,
$\chi_E$ denotes its characteristic function.

\section{Preliminaries}\label{s2}

\hskip\parindent
In this section, we will recall some necessary
notions and notation and the Calder\'on-Zygmund decomposition
which was established in \cite{ad10}. We begin with the definition of
upper doubling space in \cite{h10}.

\begin{defn}\label{d2.1}\rm
A metric measure space $(\cx,\,d,\,\mu)$ is called {\it upper
doubling} if $\mu$ is a Borel measure on $\cx$ and there exists a
dominating function $\lz:\,\cx\times(0,\,\fz)\rightarrow (0,\,\fz)$
and a positive constant $C_\lz$ such that for each $x\in\cx$,
$r\rightarrow \lz(x,\,r)$ is non-decreasing, and for all  $x\in\cx$
and $r\in(0,\,\fz)$,
\begin{equation*}
\mu(B(x,\,r))\le\lz(x,\,r)\le C_\lz\lz(x,\,r/2).
\end{equation*}
\end{defn}

\begin{rem}\label{r1.1}\rm
(i) Obviously, a space of homogeneous type is a special case of the
upper doubling spaces, where one can take the dominating function
$\lz(x,\,r)\equiv\mu(B(x,\,r))$. Moreover, let $\mu$ be a
non-negative Radon measure on $\rn$ which only satisfies the
polynomial growth condition \eqref{e1.2}. By taking $\lz(x,\,r)\equiv Cr^\kz$, we
see that $(\rn,\,|\cdot|,\,\mu)$ is also an upper doubling measure
space.

(ii) It was proved in \cite{hyy10} that there exists a dominating
function $\wz\lz$ related to $\lz$ satisfying the property that
there exists a positive constant $C_{\wz\lz}$ such that
$\wz\lz\le\lz$, $C_{\wz\lz}\le C_{\lz}$, and for all $x,\,y\in\cx$
with $d(x,\,y)\le r$,
\begin{equation}\label{e2.1}
\wz\lz(x,\,r)\le C_{\wz\lz}\wz\lz(y,\,r).
\end{equation}
Based on this, in this paper, we {\it always assume} that the
dominating function $\lz$ also satisfies \eqref{e2.1}.

\end{rem}

Throughout the whole paper, we also {\it always assume} that the
underlying metric space $(\cx,\,d)$ satisfies the following
geometrically doubling condition introduced in \cite{h10}.

\begin{defn}\label{d2.2}\rm
A metric space $(\cx,\,d)$ is called {\it geometrically doubling} if
there exists some $N_0\in\nn^+\equiv\{1,\,2,\,\cdots\}$ such that
for any ball $B(x,\,r)\subset\cx$, there exists a finite ball
covering $\{B(x_i,\,r/2)\}_i$ of $B(x,\,r)$ such that the
cardinality of this covering is at most $N_0$.
\end{defn}

The following coefficients $\dz(B,\,S)$ for all balls $B$ and $S$
were introduced in \cite{h10} as analogues of Tolsa's numbers
$K_{Q,\,R}$ in \cite{t01ma}; see also \cite{hyy10}.

\begin{defn}\label{d2.3}\rm
For all balls $B\subset S$, let
$$
\dz(B,\,S)\equiv\int_{(2S)\setminus
B}\frac{d\mu(x)}{\lz(c_B,d(x,\,c_B))}.
$$
where and in that follows, for a ball $B\equiv B(c_B,\,r_B)$ and
$\rho\in(0,\,\fz)$, $\rho B\equiv B(c_B,\,\rho r_B)$.
\end{defn}

In what follows, for each $p\in(0, \fz)$, $L^p_\loc(\mu)$ denotes
the {\it set of all functions $f$ such that $|f|^p$ is $\mu$-locally
integrable}.

\begin{defn}\label{d2.4}\rm
Let $\eta\in(1, \fz)$ and $p\in(0, \fz)$. A function $f\in L_{\rm
loc}^p(\mu)$ is said to be in the {\it space $\rbmop$} if there
exists a non-negative constant $C$ and a complex number $f_B$ for
any ball $B$ such that for all balls $B$,
\begin{equation*}
\frac{1}{\mu(\eta B)}\int_B|f(y)-f_B|^p\,d\mu(y)\le C^p
\end{equation*}
and that for all balls $B\subset S$,
\begin{equation*}
|f_B-f_S|\le C[1+\dz(B,\,S)].
\end{equation*}
Moreover, the {\it $\rbmop$ norm} of $f$ is defined to be the
minimal constant $C$ as above and denoted by $\|f\|_\rbmop$.
\end{defn}

When $p=1$, we write ${\rm RBMO}^1_\eta(\mu)$ simply by $\rbmo(\mu)$, which
was introduced by Hyt\"onen in \cite{h10}. Moreover, the spaces $\rbmop$
and $\rbmo(\mu)$ coincide with equivalent norms, which is the
special case of \cite[Corollary 2.1]{hmy}.

\begin{prop}\label{p2.1}
Let $\eta\in(1, \fz)$ and $p\in(0, \fz)$. The spaces $\rbmop$ and
$\rbmo(\mu)$ coincide with equivalent norms.
\end{prop}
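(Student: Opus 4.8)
The plan is to prove the two independences separately and then combine them: independence of the exponent $p$ for a fixed dilation $\eta$, and independence of $\eta$ for a fixed $p$, which together with $\rbmo(\mu)=\mathrm{RBMO}^1_{\eta}(\mu)$ identify all the spaces $\mathrm{RBMO}^p_\eta(\mu)$. In each independence one inclusion is elementary. For the exponents, fix $0<p\le q<\fz$ and a ball $B$; writing $t:=\mu(B)/\mu(\eta B)\le1$ and applying Hölder's inequality on the probability space $(B,\,\mu/\mu(B))$ one gets
\[
\frac1{\mu(\eta B)}\int_B|f-f_B|^p\,d\mu
=t^{\,1-p/q}\lf(\frac1{\mu(\eta B)}\int_B|f-f_B|^q\,d\mu\r)^{p/q},
\]
so, since $t^{1-p/q}\le1$, the same $f_B$ show that the $q$-seminorm controls the $p$-seminorm; the coefficient condition is literally the same. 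Hence $\mathrm{RBMO}^q_\eta(\mu)\subset\mathrm{RBMO}^p_\eta(\mu)$. For the dilations, if $1<\eta_1\le\eta_2$ then $\mu(\eta_1B)\le\mu(\eta_2B)$, so any constant and family $\{f_B\}$ witnessing membership in $\mathrm{RBMO}^1_{\eta_1}(\mu)$ also witness membership in $\mathrm{RBMO}^1_{\eta_2}(\mu)$, giving $\mathrm{RBMO}^1_{\eta_1}(\mu)\subset\mathrm{RBMO}^1_{\eta_2}(\mu)$.

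The first substantive step is the reverse exponent inclusion $\mathrm{RBMO}^1_\eta(\mu)\subset\mathrm{RBMO}^q_\eta(\mu)$ for all $q\in(0,\fz)$, which I would obtain from a John--Nirenberg inequality. Normalizing $\|f\|_{\mathrm{RBMO}^1_\eta(\mu)}=1$ and fixing a ball $B$, I would run a Calder\'on--Zygmund stopping-time argument inside $B$ at levels growing geometrically, producing a family of subballs on which the local mean passes a prescribed threshold; the upper doubling of $\lz$ controls the measure gained at each stopping ball, while the coefficient estimate $|f_{B'}-f_B|\ls1+\dz(B',B)$ keeps the reference constants $f_{B'}$ close to $f_B$ along the stopping tree. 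Summing the geometric series yields exponential decay of $\mu(\{x\in B:\,|f(x)-f_B|>s\})$ against $\mu(\eta B)$, whence $\frac1{\mu(\eta B)}\int_B|f-f_B|^q\,d\mu\ls1$ for every $q\in(0,\fz)$, i.e. $\mathrm{RBMO}^1_\eta(\mu)\subset\mathrm{RBMO}^q_\eta(\mu)$.

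The second substantive step is the reverse dilation inclusion $\mathrm{RBMO}^1_{\eta_2}(\mu)\subset\mathrm{RBMO}^1_{\eta_1}(\mu)$ for $1<\eta_1<\eta_2$, which I would handle by a covering argument rather than by passing to doubling balls. Setting $c:=(\eta_1-1)/(2\eta_2)$ and $s:=cr$ for $B=B(c_B,r)$, geometric doubling furnishes points $x_i\in B$, of cardinality bounded by a constant depending only on $c$ and $N_0$, such that $\{B_i:=B(x_i,s)\}_i$ covers $B$ and each $\eta_2B_i\subset\eta_1B$. Keeping the same $f_B$, I would estimate
\[
\int_B|f-f_B|\,d\mu\le\sum_i\int_{B_i}|f-f_{B_i}|\,d\mu+\sum_i\mu(B_i)\,|f_{B_i}-f_B|.
\]
The first sum is at most $\sum_i\mu(\eta_2B_i)\ls\mu(\eta_1B)$ by $\eta_2B_i\subset\eta_1B$ and bounded overlap; for the second, since $B_i\subset B$ have comparable radii, monotonicity of $\lz$ and \eqref{e2.1} give $\dz(B_i,B)\ls1$, so $|f_{B_i}-f_B|\ls1$ and the sum is again $\ls\mu(\eta_1B)$. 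Dividing by $\mu(\eta_1B)$ yields the $\eta_1$-oscillation bound, the coefficient condition being unchanged.

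I expect the John--Nirenberg inequality to be the main obstacle: unlike the dilation step, it is not a soft covering estimate but requires a genuine stopping-time decomposition adapted to the non-doubling measure, with the measure accounting done through $\lz$ and the deviation of the local constants controlled through the numbers $\dz(B',B)$. The dilation step, by contrast, is delicate only in the bound $\dz(B_i,B)\ls1$, which is exactly where the upper doubling of $\lz$ and the regularity \eqref{e2.1} are used; the geometric doubling enters solely to keep the covering $\{B_i\}$ finite with a dimension-type bound. Combining the four inclusions gives $\mathrm{RBMO}^p_\eta(\mu)=\mathrm{RBMO}^1_\eta(\mu)=\rbmo(\mu)$ with equivalent norms for all $p\in(0,\fz)$ and $\eta\in(1,\fz)$.
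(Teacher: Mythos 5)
The paper does not actually prove Proposition \ref{p2.1}: it quotes it as a special case of \cite[Corollary 2.1]{hmy}, so your argument has to stand entirely on its own. Judged that way, it has a structural gap exactly where the proposition is strongest, namely for $p\in(0,1)$. Your four inclusions are: (i) $\mathrm{RBMO}^q_\eta(\mu)\subset\mathrm{RBMO}^p_\eta(\mu)$ for $p\le q$ (H\"older); (ii) $\mathrm{RBMO}^1_\eta(\mu)\subset\mathrm{RBMO}^q_\eta(\mu)$ for all $q$ (John--Nirenberg); (iii)--(iv) dilation independence at $p=1$. For $p\ge 1$ these combine as you say. But for $p<1$, both (i) and (ii) only give $\rbmo(\mu)\subset\mathrm{RBMO}^p_\eta(\mu)$; the converse inclusion $\mathrm{RBMO}^p_\eta(\mu)\subset\rbmo(\mu)$ is produced by none of them. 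H\"older goes the wrong way, and the John--Nirenberg inequality takes membership in $\mathrm{RBMO}^1_\eta(\mu)$ as its \emph{hypothesis}, so it says nothing about a function that is only known to satisfy the $p$-oscillation bound with $p<1$ --- such a function is a priori only in $L^p_{\loc}(\mu)$, not even locally integrable. Upgrading from exponent $p<1$ to exponent $1$ is a genuine theorem of John--Str\"omberg type (a stopping-time or good-$\lambda$ argument run on $|f-f_B|^p$, or a median-based characterization), and that is precisely the content of the cited \cite[Corollary 2.1]{hmy}. The range $p<1$ is not a side case here: the paper invokes Proposition \ref{p2.1} in Lemma \ref{l3.1} with $p=r\in(0,1)$.

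Two further points, less fatal but worth noting. First, even in the range $p\ge1$ that your scheme does cover, the John--Nirenberg inequality is only sketched (``I would run a Calder\'on--Zygmund stopping-time argument''); in the non-homogeneous setting this is itself a substantial theorem (Hyt\"onen's adaptation of Tolsa's argument), so the central self-improvement remains deferred rather than proved. Second, two fixable slips: the H\"older display should be an inequality ``$\le$'', not an equality; and in the covering step the balls $B_i=B(x_i,cr)$ with $x_i\in B$ need not satisfy $B_i\subset B$, so $\dz(B_i,B)$ and the coefficient inequality of Definition \ref{d2.4} do not apply directly --- you should compare $f_{B_i}$ and $f_B$ through a common enlargement such as $(1+c)B$, for which $\dz(B_i,(1+c)B)\ls 1$ and $\dz(B,(1+c)B)\ls 1$ do follow from the upper doubling of $\lz$ and \eqref{e2.1}. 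In summary: the dilation argument and the $p\ge1$ skeleton are on the right track, but as written the proposition is not established on its stated range $p\in(0,\fz)$.
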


\begin{rem}\label{r2.2}\rm
It was proved in \cite[Lemma 4.6]{h10} that the space $\rbmo(\mu)$
is independent of the choice of $\eta$. By this and Proposition
\ref{p2.1}, it is obvious that the space $\rbmop$ is independent of
the choice of $\eta$.
\end{rem}

We now recall the definition of the atomic Hardy space introduced in
\cite{hyy10}; see also \cite{ad10}.

\begin{defn}\label{d2.5}\rm
Let $\rho\in(1, \fz)$ and $p\in (1,\,\fz]$. A function $b\in
L^1(\mu)$ is called a $(p,\,1)_\lz$-{\it atomic block} if
\begin{enumerate}
\item[(i)] there exists some ball $B$ such that $\supp (b)\subset B$;

\item[(ii)] $\int_\cx b(x)\,d\mu(x)=0$;

\item[(iii)] for $j=1,\,2$, there exist functions $a_j$
supported on balls $B_j\subset B$ and $\lz_j\in\cc$ such that
$$
b=\lz_1a_1+\lz_1a_2,
$$
and
$$
\|a_j\|_{L^p(\mu)}\le[\mu(\rho B_j)]^{1/p-1}[1+\dz(B_j,\,B)]^{-1}.
$$
Moreover, let
$$
|b|_{H_{atb}^{1,\,p}(\mu)}\equiv|\lz_1|+|\lz_2|.
$$
\end{enumerate}

A function $f\in L^1(\mu)$ is said to belong to the {\it atomic
Hardy space} $H_{atb}^{1,\,p}(\mu)$ if there exist
$(p,\,1)_\lz$-atomic blocks $\{b_j\}_{j\in\nn}$ such that
$f=\sum_{j=1}^\fz b_j$ and $\sum_{j=1}^\fz
|b_j|_{H_{atb}^{1,\,p}(\mu)}<\fz$. The {\it norm} of $f$ in
$H_{atb}^{1,\,p}(\mu)$ is defined by
$$
\|f\|_{H_{atb}^{1,\,p}(\mu)}\equiv \inf
\lf\{\sum_j|b_j|_{H_{atb}^{1,\,p}(\mu)}\r\},
$$
where the infimum is taken over all the possible decompositions of
$f$ as above.
\end{defn}

\begin{rem}\label{r2.3}\rm
It was proved in \cite{hyy10} that for each $p\in(1,\,\fz]$, the
atomic Hardy space $H_{atb}^{1,\,p}(\mu)$ is independent of the
choice of $\rho$, and that for all $p\in(1,\,\fz)$, the spaces
$H_{atb}^{1,\,p}(\mu)$ and $H_{atb}^{1,\,\fz}(\mu)$ coincide with
equivalent norms. Thus, in the following, we denote
$H_{atb}^{1,\,p}(\mu)$ simply by $H^1(\mu)$.
\end{rem}

At the end of this section, we recall the $(\az,\,\bz)$-doubling
property of some balls and the Calder\'on-Zygmund decomposition
established by Anh and Duong \cite[Theorem 6.3]{ad10}.

Given $\az,\,\bz\in(1,\,\fz)$, a ball $B\subset\cx$ is called {\it
$(\az,\,\bz)$-doubling} if $\mu(\az B)\le\bz\mu(B)$. It was proved
in \cite{h10} that if a metric measure space $(\cx,\,d,\,\mu)$ is
upper doubling and $\bz>C_\lz^{\log_2\az}\equiv\az^\nu$, then for
every ball $B\subset\cx$, there exists some
$j\in\zz_+\equiv\nn\cup\{0\}$ such that $\az^j B$ is
$(\az,\,\bz)$-doubling. Moreover, let $(\cx,\,d)$ be geometrically
doubling, $\bz>\az^n$ with $n\equiv\log_2 N_0$ and $\mu$ a Borel
measure on $\cx$ which is finite on bounded sets. Hyt\"onen
\cite{h10} also showed that for $\mu$-almost every $x\in\cx$, there
exist arbitrarily small $(\az,\,\bz)$-doubling balls centered at
$x$. Furthermore, the radius of these balls may be chosen to be of
the form $\az^{-j}r$ for $j\in\nn$ and any preassigned number
$r\in(0,\,\fz)$. Throughout this paper, for any $\az\in(1,\,\fz)$
and ball $B$, ${\wz B}^\az$ denotes the {\it smallest
$(\az,\,\bz_\az)$-doubling ball of the form $\az^j B$ with
$j\in\zz_+$}, where
\begin{equation}\label{e2.2}
\bz_\az\equiv\max\lf\{\az^n,\,\az^\nu\r\}+30^n+30^\nu
=\az^{\max\{n,\,\nu\}}+30^n+30^\nu.
\end{equation}

\begin{lem}\label{l2.1}
Let $p\in[1,\,\fz)$, $f\in L^p(\mu)$ and $\ell\in(0,\fz)$
($\ell>\ell_0\equiv \gz_0\|f\|_{L^p(\mu)}/\mu(\cx)$ if
$\mu(\cx)<\fz$, where $\gz_0$ is any fixed positive constant
satisfying that $\gz_0>max\{C_\lz^{3\log_2 6},\,6^{3n}\}$, $C_\lz$
is as in \eqref{e2.2} and $n=\log_2 N_0$). Then

\vspace{0.2cm} {\rm (i)} there exists an almost disjoint family
$\{6B_j\}_j$ of balls such that $\{B_j\}_j$ is pairwise disjoint,
$$
\frac{1}{\mu(6^2B_j)}\int_{B_j}|f(x)|^p\,d\mu(x)
>\frac{\ell^p}{\gz_0} \quad{\text for\, all\, j,}
$$
$$
\frac{1}{\mu(6^2\eta B_j)}\int_{\eta B_j}|f(x)|^p\,d\mu(x) \le
\frac{\ell^p}{\gz_0} \quad{\text for\, all\, j\, and\, all\,
\eta>1,}
$$
and
$$
|f(x)|\le \ell \quad{\text for\, \mu-almost\, every\, x\in
\cx\setminus(\cup_j6B_j);}
$$

{\rm (ii)} for each j, let $S_j$ be a $(3\times
6^2,\,C_\lz^{\log_2(3\times 6^2)+1})$-doubling ball concentric with
$B_j$ satisfying that $r_{S_j}>6^2r_{B_j}$, and
$\oz_j\equiv\chi_{6B_j}/(\sum_k\chi_{6B_k})$. Then there exists a
family $\{\fai_j\}_j$ of functions such that for each $j$,
$\supp(\fai_j)\subset S_j$, $\fai_j$ has a constant sign on $S_j$
and
$$
\int_\cx\fai_j(x)\,d\mu(x)=\int_{6B_j}f(x)\oz_j(x)\,d\mu(x),
$$
$$
\sum_j|\fai_j(x)|\le \gz\ell \quad{\text for\, \mu-almost\, every\,
x\in\cx,}
$$
where $\gz$ is some positive constant depending only on
$(\cx,\,\mu)$, and there exists a positive constant $C$, independent
of $f$, $\ell$ and $j$, such that
$$
\|\fai_j\|_{L^\fz (\mu)}\mu(S_j)\le
C\int_\cx|f(x)\oz_j(x)|\,d\mu(x),
$$
and if $p\in(1,\,\fz)$,
\begin{align*}
&\lf\{\int_{S_j}|\fai_j(x)|^p\,d\mu(x)\r\}^{1/p}[\mu(S_j)]^{1/p'}\le
\frac{C}{\ell^{p-1}}\int_\cx |f(x)\oz_j(x)|^p\,d\mu(x);
\end{align*}

{\rm (iii)} if for any $j$, choosing $S_j$ in {\rm(ii)} to be the
smallest $(3\times6^2,\,C_\lz^{\log_2(3\times 6^2)+1})$-doubling
ball of $(3\times 6^2)B_j$, then $h\equiv\sum_j(f\oz_j-\fai_j)\in
H_{atb}^{1,\,p}(\mu)$ and there exists a positive constant $C$,
independent of $f$ and $\ell$, such that
$$
\|h\|_{H_{atb}^{1,\,p}(\mu)}\le
\frac{C}{\ell^{p-1}}\|f\|_{L^p(\mu)}^p.
$$
\end{lem}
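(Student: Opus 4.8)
The plan is to prove the Calderón-Zygmund decomposition (Lemma 2.1) in three linked stages, following the classical Tolsa scheme adapted to the upper-doubling setting. Let me think through what each part requires.

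Part (i) is a Vitali-type covering argument. For the stopping set where $|f|$ is large, I would use the maximal-function-style selection: consider the set of balls $B$ where the normalized $L^p$ average of $|f|^p$ over $B$ (relative to $\mu(6^2 B)$) exceeds $\ell^p/\gamma_0$. The threshold condition on $\gamma_0$ (namely $\gamma_0 > \max\{C_\lambda^{3\log_2 6}, 6^{3n}\}$) is exactly what's needed to run the Besicovitch/Vitali selection so that the dilated balls $6B_j$ are almost disjoint while the $B_j$ themselves are disjoint. The second displayed inequality (the "just below threshold" bound for $\eta B_j$) comes from the maximality of the selected balls. The a.e. bound $|f|\le\ell$ off $\cup_j 6B_j$ follows from the Lebesgue differentiation theorem valid on geometrically doubling spaces.

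Part (ii) constructs the smooth bump functions $\varphi_j$. The key idea is to define $\varphi_j$ on $S_j$ with constant sign and prescribed integral $\int_{6B_j} f\omega_j\,d\mu$, distributing the "mass" $f\omega_j$ onto the doubling balls $S_j$. I would set $\varphi_j$ to be constant (or a controlled multiple of $\chi_{S_j}$) so that its integral matches. The $L^\infty$ bound $\|\varphi_j\|_{L^\infty}\mu(S_j)\lesssim \int|f\omega_j|$ is immediate from the construction. For the pointwise sum bound $\sum_j|\varphi_j|\le\gamma\ell$, I would exploit that the $S_j$ have bounded overlap (again from geometric doubling) and that each $\varphi_j$ is controlled by the average of $|f|$ on $6B_j$, which is comparable to $\ell$ by the stopping condition in (i). The $L^p$ estimate when $p\in(1,\infty)$ follows from Hölder's inequality combined with the $L^\infty$ and integral bounds.

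Part (iii) is where I expect the main obstacle. I must show $h=\sum_j(f\omega_j-\varphi_j)$ lies in $H^1_{atb}(\mu)$ with the stated norm control. Each summand $f\omega_j-\varphi_j$ has mean zero by the defining property of $\varphi_j$ and is supported in $S_j$, so it is a candidate atomic block. The delicate point is verifying the atomic block structure of Definition 2.5—specifically decomposing each $f\omega_j-\varphi_j$ as $\lambda_1 a_1+\lambda_2 a_2$ with the correct normalization involving $[1+\delta(B_j, S_j)]^{-1}$, and then summing $\sum_j |b_j|_{H^1}$ against $\ell^{-(p-1)}\|f\|_{L^p}^p$. Controlling $\delta(B_j, S_j)$ requires the choice of $S_j$ as the smallest doubling ball of $(3\times 6^2)B_j$, so that $\delta(B_j, S_j)$ stays uniformly bounded, and the summation uses the disjointness from (i) together with the stopping inequality to convert the sum of local $L^p$-masses into $\|f\|_{L^p(\mu)}^p$. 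The hard part will be tracking the coefficients $\delta(B_j, S_j)$ and $\delta(B_j, 6B_j)$ through the atomic normalization while keeping all constants independent of $f$, $\ell$, and $j$; this is the technical heart where the upper-doubling and geometric-doubling hypotheses must be used in tandem.
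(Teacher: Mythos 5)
The paper itself offers no proof of Lemma \ref{l2.1}: it is recalled verbatim from Anh and Duong \cite[Theorem 6.3]{ad10}, whose argument follows Tolsa's Calder\'on--Zygmund decomposition for non-doubling measures, so your proposal must be measured against that argument. Your outlines of (i) and (iii) name the right ingredients, but part (ii) rests on a false claim. You take $\varphi_j$ to be (essentially) a constant times $\chi_{S_j}$ and deduce $\sum_j|\varphi_j|\le\gamma\ell$ from ``bounded overlap of the $S_j$, again from geometric doubling''. The $S_j$ need not have bounded overlap: since $\mu$ is not doubling, the smallest $(3\times6^2,\,C_\lz^{\log_2(3\times6^2)+1})$-doubling ball concentric with $B_j$ may have radius arbitrarily many times $r_{B_j}$ (a ball of tiny measure sitting near a concentration of mass must be dilated until it swallows that mass before it becomes doubling), so infinitely many $S_j$ can contain a common point even though the $6B_j$ are almost disjoint; geometric doubling controls overlap only for balls of comparable radii with suitably separated centers. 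Your construction does give $|\varphi_j|\ls\ell$ for each fixed $j$ (using the doubling of $S_j$), but nothing bounds the number of summands at a point. This is exactly why the actual proof constructs the $\varphi_j$ inductively: order the $S_j$ by nondecreasing radius, set $\varphi_j=\az_j\chi_{A_j}$ where $A_j\subset S_j$ is the subset on which the previously constructed sum $\sum_{k<j}|\varphi_k|$ stays below a fixed multiple of $\ell$, and prove $\mu(A_j)\ge\mu(S_j)/2$ by a Chebyshev argument combined with the stopping inequalities of (i) and the doubling of $S_j$. Without this device (or an equivalent one), the bound $\sum_j|\varphi_j|\le\gamma\ell$ in (ii) --- and hence the atomic-block estimates in (iii), which use it --- is unsupported.

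Part (i) also has two soft spots. The Besicovitch covering theorem you invoke is not available in this generality (it can fail in geometrically doubling metric spaces); the selection must be a Vitali ($5r$)-type argument applied to balls of almost-maximal admissible radius, with the almost disjointness of $\{6B_j\}$ argued separately. Moreover, you misassign the role of $\gamma_0$: the requirement $\gamma_0>\max\{C_\lz^{3\log_26},\,6^{3n}\}$ is not an overlap constant for the covering; it is the doubling threshold guaranteeing that $\mu$-a.e.\ point is the center of arbitrarily small $(6^3,\gamma_0)$-doubling balls $B$, for which $\mu(6^2B)\le\mu(6^3B)\le\gamma_0\mu(B)$, so that the differentiation theorem produces, for a.e.\ $x$ with $|f(x)|>\ell$, a ball centered at $x$ satisfying the first stopping inequality; this is how the covering family is generated in the first place. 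Relatedly, your sketch never uses the restriction $\ell>\ell_0$ when $\mu(\cx)<\fz$ (nor the finiteness of $\|f\|_{L^p(\mu)}$ when $\mu(\cx)=\fz$), which is what keeps the admissible radii finite so that almost-maximal balls exist and the selection can be run at all.
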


\section{Proof Theorem \ref{t1.1}}\label{s3}

\hskip\parindent To prove Theorem \ref{t1.1}, we also need some
maximal functions in \cite{h10,ad10} as follows. Let $f\in
L_\loc^1(\mu)$ and $x\in\cx$. The {\it doubling Hardy-Littlewood
maximal function} $N(f)(x)$ and the {\it sharp maximal function}
$M^\sharp(f)(x)$ are respectively defined by setting,
$$
N(f)(x)\equiv\sup_{\gfz{B\ni x}{B (6, \,\bz_6)-{\rm
doubling}}}\dfrac{1}{\mu(B)}\int_B|f(y)|\,d\mu(y),
$$
and
\begin{align*}
M^\sharp(f)(x)&\equiv\sup_{B\ni x}
\frac{1}{\mu(5B)}\int_B|f(y)-m_{\wz B^6}(f)|\,d\mu(y)\nonumber\\
&\hs+\sup_{\gfz{x\in B\subset S}{B,\, S (6,\,\bz_6)-{\rm
doubling}}}\dfrac{|m_B(f)-m_S(f)|}{1+\dz(B,\,S)},
\end{align*}
where for any $f\in L_\loc^1(\mu)$ and ball $B$, $m_B(f)$ means its
average over $B$, namely, $m_B(f)\equiv\dfrac{1}{\mu(B)}\int_B
f(x)\,d\mu(x)$. It was showed in \cite[Lemma 2.3]{hlyy} that for any
$p\in[1,\,\fz)$, $Nf$ is bounded from $L^p(\mu)$ to
$L^{p,\,\fz}(\mu)$.

\begin{lem}\label{l3.1}
Let $f\in {\rm RBMO(\mu)}$, $r\in(0, 1)$ and
$M^\sharp_r(f)\equiv[M^\sharp(|f|^r)]^{1/r}$. Then we have
$M^\sharp_rf\in L^\fz(\mu)$, and moreover,
$$\|M^\sharp_rf\|_{L^\fz(\mu)}\ls\|f\|_{{\rm RBMO(\mu)}}.$$
\end{lem}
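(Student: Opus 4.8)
The plan is to prove the pointwise bound $M^\sharp(|f|^r)(x)\ls\|f\|_{\rbmo(\mu)}^r$ uniformly in $x\in\cx$; raising this to the power $1/r$ then yields $M^\sharp_r(f)(x)=[M^\sharp(|f|^r)(x)]^{1/r}\ls\|f\|_{\rbmo(\mu)}$ for a.e.\ $x$, which is the claim. By Proposition \ref{p2.1}, $f\in{\rm RBMO}^r_5(\mu)$ with $\|f\|_{{\rm RBMO}^r_5(\mu)}\ls\|f\|_{\rbmo(\mu)}$; in particular $f\in L^r_\loc(\mu)$, so $|f|^r\in L^1_\loc(\mu)$ and $M^\sharp(|f|^r)$ is well defined. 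I would fix once and for all a family $\{f_B\}_B$ of complex numbers realizing (up to a constant) the ${\rm RBMO}^r_5(\mu)$ norm, so that for every ball $B$ one has $\frac{1}{\mu(5B)}\int_B|f(y)-f_B|^r\,d\mu(y)\ls\|f\|_{\rbmo(\mu)}^r$, and for all balls $B\subset S$ one has $|f_B-f_S|\ls\|f\|_{\rbmo(\mu)}[1+\dz(B,S)]$.

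The whole argument rests on two elementary inequalities valid for $r\in(0,1)$ and $s,t\ge0$: namely $|s^r-t^r|\le|s-t|^r$ and $(s+t)^r\le s^r+t^r$; combined with $||a|-|b||\le|a-b|$ for $a,b\in\cc$, these give $||a|^r-|b|^r|\le|a-b|^r$. I would also repeatedly use $(1+\dz)^r\le1+\dz$ for $\dz\ge0$ (since $x\mapsto x^r$ is increasing and $x^r\le x$ for $x\ge1$). This $r$-subadditivity is precisely what makes the variant $M^\sharp_r$ with $r<1$ behave well: it converts oscillations of $|f|^r$ into the $L^r$-oscillation of $f$, which is controlled by the ${\rm RBMO}^r_5(\mu)$ characterization; for $r=1$ the first of these inequalities fails and the reduction breaks down.

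For the first (oscillation) term of $M^\sharp(|f|^r)(x)$, fix a ball $B\ni x$ and abbreviate $\wz B\equiv\wz B^6$. Writing $|f(y)|^r-m_{\wz B}(|f|^r)=(|f(y)|^r-|f_{\wz B}|^r)+(|f_{\wz B}|^r-m_{\wz B}(|f|^r))$ and using the inequalities above together with $|f_B-f_{\wz B}|\ls\|f\|_{\rbmo(\mu)}[1+\dz(B,\wz B)]$, I would bound the $B$-average of the first group by the ${\rm RBMO}^r_5$ oscillation over $B$ plus $|f_B-f_{\wz B}|^r$. The second group is a constant whose size is $\le\frac{1}{\mu(\wz B)}\int_{\wz B}|f(y)-f_{\wz B}|^r\,d\mu(y)$; since $\wz B$ is $(6,\bz_6)$-doubling, $\mu(\wz B)\gs\mu(5\wz B)$, so this is again $\ls\|f\|_{\rbmo(\mu)}^r$. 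The remaining task is to control $\dz(B,\wz B^6)$; here I would invoke the standard estimate $\dz(B,\wz B^6)\ls1$ from \cite{h10}, whose proof decomposes $2\wz B^6\setminus B$ into the annuli $6^{k+1}B\setminus6^kB$ and exploits that, by the definition of $\wz B^6$, each inner ball fails the $(6,\bz_6)$-doubling condition; because $\bz_6>6^\nu$ by \eqref{e2.2}, the resulting series converges geometrically. Together these give that the first term of $M^\sharp(|f|^r)(x)$ is $\ls\|f\|_{\rbmo(\mu)}^r$.

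For the second term, let $B\subset S$ be $(6,\bz_6)$-doubling balls with $x\in B$. I split $m_B(|f|^r)-m_S(|f|^r)$ through $|f_B|^r$ and $|f_S|^r$: the two ``endpoint'' differences $|m_B(|f|^r)-|f_B|^r|$ and $|m_S(|f|^r)-|f_S|^r|$ are each $\ls\|f\|_{\rbmo(\mu)}^r$ by the same doubling-plus-oscillation estimate as above (using $\mu(B)\gs\mu(5B)$ and $\mu(S)\gs\mu(5S)$), while the middle term obeys $||f_B|^r-|f_S|^r|\le|f_B-f_S|^r\ls\|f\|_{\rbmo(\mu)}^r[1+\dz(B,S)]^r\le\|f\|_{\rbmo(\mu)}^r[1+\dz(B,S)]$. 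Dividing by $1+\dz(B,S)\ge1$ shows the second term of $M^\sharp(|f|^r)(x)$ is also $\ls\|f\|_{\rbmo(\mu)}^r$. Combining the two terms completes the pointwise bound and hence the lemma. The main obstacle is the bookkeeping in the first term, specifically the passage between the arbitrary ball $B$ and its doubling companion $\wz B^6$, which forces the geometric estimate $\dz(B,\wz B^6)\ls1$; the $r$-subadditivity inequalities handle everything else cleanly.
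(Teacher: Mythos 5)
Your proposal is correct and follows essentially the same route as the paper: both reduce to the pointwise bound $M^\sharp(|f|^r)\ls\|f\|_{\rbmo(\mu)}^r$ via Proposition \ref{p2.1} (the ${\rm RBMO}_5^r(\mu)$ characterization), the elementary inequality $||a|^r-|b|^r|\le|a-b|^r$, the doubling property of $\wz B^6$, and the key geometric fact $\dz(B,\wz B^6)\ls1$. The only (immaterial) differences are that you split the oscillation term through the constant $f_{\wz B^6}$ rather than through $m_{\wz B^6}(f)$, which lets you skip the paper's preliminary estimate $|f_{\wz B^6}-m_{\wz B^6}(f)|\ls\|f\|_{\rbmo(\mu)}$, and that the paper cites \cite{hyy10} rather than \cite{h10} for $\dz(B,\wz B^6)\ls1$.
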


\begin{proof}
From Remark \ref{r2.2}, we deduce that for any ball $B$,
$$\lf|f_{\wz B^6}-m_{\wz B^6}(f)\r|\le \dfrac1{\mu(\wz B^6)}\dint_{\wz B^6}\lf|f(x)-f_{\wz B^6}\r|\,d\mu(x)\ls \|f\|_{\rbmo(\mu)}.$$
On the other hand, by Proposition \ref{p2.1} and Remark \ref{r2.2}, we see that
$$\|f\|_{\rbmo(\mu)}\sim\|f\|_{{\rm RBMO}_5^r(\mu)}.$$
From these facts, it follows that
\begin{align*}
&\dfrac1{\mu(5B)}\dint_B\lf||f(x)|^r-m_{\wz B^6}(|f|^r)\r|\,d\mu(x)\\
&\hs\le \dfrac1{\mu(5B)}\dint_B\lf[\lf||f(x)|^r-\lf|m_{\wz B^6}(f)\r|^r\r|+\lf|\lf|m_{\wz B^6}(f)\r|^r-m_{\wz B^6}(|f|^r)\r|\r]\,d\mu(x)\\
&\hs\ls \dfrac1{\mu(5B)}\dint_B\lf|f(x)-f_B\r|^r\,d\mu(x)+\lf|f_B-f_{\wz B^6}\r|^r+\lf|f_{\wz B^6}-m_{\wz B^6}(f)\r|^r\\
&\hs\hs+\dfrac1{\mu{(\wz B^6)}}\dint_{\wz B^6}\lf|f(x)-f_{\wz B^6}\r|^r\,d\mu(x)\\
&\hs\ls \lf[1+\dz\lf(B, {\wz B^6}\r)\r]^r\|f\|^r_{{\rm
RBMO}(\mu)}\ls\|f\|^r_{{\rm RBMO}(\mu)},
\end{align*}
where the last inequality follows from the fact that $\dz(B, {\wz
B^6})\ls 1$, which holds by \cite[Lemma 2.1]{hyy10}.

On the other hand, for any $(6, \bz_6)$-doubling balls $B\subset S$,
\begin{align*}
|m_B(|f|^r)-m_S(|f|^r)|&\le \lf|m_B(|f|^r)-|f_B|^r\r|+||f_B|^r-|f_S|^r|+||f_S|^r-m_S(|f|^r)|\\
&\le m_B(|f-f_B|^r)+|f_B-f_S|^r+m_S(|f-f_S|^r)\\
&\ls[1+\dz(B, S)]^r\|f\|^r_{{\rm RBMO}(\mu)}.
\end{align*}
Combining these two inequalities finishes the proof of Lemma
\ref{l3.1}.
\end{proof}

\begin{lem}\label{l3.2}
Let $p\in[1,\,\fz)$ and $f\in L^1_{\rm loc}(\mu)$ such that $\int_\cx f(x)\,d\mu(x)=0$
if $\mu(\cx)<\fz$. If for any $R>0$,
$$
\sup_{0<\ell<R}\ell^p\mu(\{x\in\cx:\,N(f)(x)>\ell\})<\fz,
$$
we then have
\begin{equation*}
\sup_{\ell>0}\ell^p\mu(\{x\in\cx:\,N(f)(x)>\ell\})
\ls\sup_{\ell>0}\ell^p\mu\lf(\lf\{x\in\cx:\,M^\sharp (f)(x)>\ell\r\}\r).
\end{equation*}
\end{lem}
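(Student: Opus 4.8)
The plan is to prove a good-$\lz$ inequality comparing $N(f)$ and $M^\sharp(f)$, and then to run a self-improving absorption argument in which the qualitative hypothesis $\sup_{0<\ell<R}\ell^p\mu(\{N(f)>\ell\})<\fz$ plays the decisive role. Concretely, I would first establish that there exist constants $A\in(1,\fz)$, $\gz\in(0,1)$ and $\ez\in(0,1)$, with $A^p\ez<1$, such that for every $\ell\in(0,\fz)$,
\[
\mu(\{x\in\cx:\,N(f)(x)>A\ell,\ M^\sharp(f)(x)\le\gz\ell\})\le\ez\,\mu(\{x\in\cx:\,N(f)(x)>\ell\}).
\]
Granting this, I would write $\mu(\{N(f)>A\ell\})\le\ez\mu(\{N(f)>\ell\})+\mu(\{M^\sharp(f)>\gz\ell\})$, multiply by $(A\ell)^p$, and take the supremum over $0<\ell<R/A$. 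With $\Phi(R)\equiv\sup_{0<\ell<R}\ell^p\mu(\{N(f)>\ell\})$, this yields $\Phi(R)\le A^p\ez\,\Phi(R)+(A/\gz)^p\sup_{s>0}s^p\mu(\{M^\sharp(f)>s\})$. Since $\Phi(R)<\fz$ by hypothesis and $A^p\ez<1$, the term $A^p\ez\,\Phi(R)$ is absorbed into the left-hand side, giving a bound for $\Phi(R)$ that is uniform in $R$; letting $R\to\fz$ produces exactly the claimed inequality. This is precisely the step for which the finiteness assumption is needed: it legitimizes the absorption.

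The heart of the matter is therefore the good-$\lz$ inequality, which I would prove by localization. Fix $\ell$ and cover the level set $\Omega_\ell\equiv\{N(f)>\ell\}$ by a family $\{Q_i\}_i$ of balls with bounded overlap (available from the geometrically doubling condition via a Besicovitch/Whitney-type selection) such that each $Q_i$ lies within a controlled distance of a point $y_i$ with $N(f)(y_i)\le\ell$. For a fixed large $\kz$ one then sets $S_i\equiv\wz{(\kz Q_i)}^6$, the smallest $(6,\bz_6)$-doubling ball containing $\kz Q_i$ (which exists by the upper doubling property), arranged so that $y_i\in\kz Q_i\subset S_i$. The key local observation is that if $x\in Q_i$ satisfies $N(f)(x)>A\ell$, then its witnessing $(6,\bz_6)$-doubling ball $B\ni x$, with $m_B(|f|)>A\ell$, cannot contain $y_i$: otherwise $N(f)(y_i)\ge m_B(|f|)>A\ell>\ell$, a contradiction. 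Hence $r_B\le d(x,y_i)\ls r_{Q_i}$ and $B\subset\kz Q_i\subset S_i$. Since $S_i$ is doubling and $y_i\in S_i$, we have $m_{S_i}(|f|)\le N(f)(y_i)\le\ell$, so $|m_{S_i}(f)|\le\ell$, and the triangle inequality converts the lower bound on $m_B(|f|)$ into $m_B(|f-m_{S_i}(f)|)>(A-1)\ell$; that is, the oscillation of $f$ about the single constant $m_{S_i}(f)$ is large on $B$.

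With this reduction I would estimate the bad set inside $Q_i$ by the weak type $(1,1)$ bound for $N$ cited from \cite{hlyy}. Setting $g_i\equiv|f-m_{S_i}(f)|\chi_{\kz Q_i}$, every bad $x\in Q_i$ satisfies $N(g_i)(x)\ge m_B(g_i)>(A-1)\ell$, so $\mu(\{x\in Q_i:\,N(f)(x)>A\ell\})\ls\frac1{(A-1)\ell}\int_{\kz Q_i}|f-m_{S_i}(f)|\,d\mu$. To turn this into a small multiple of $\mu(S_i)$, I would use that the bad set inside $Q_i$ also meets $\{M^\sharp(f)\le\gz\ell\}$: since $S_i$ is $(6,\bz_6)$-doubling we have $\wz{S_i}^6=S_i$, so testing the first term of $M^\sharp(f)$ at such a point against the ball $S_i$ gives $\frac1{\mu(5S_i)}\int_{S_i}|f-m_{S_i}(f)|\,d\mu\le\gz\ell$, whence $\int_{\kz Q_i}|f-m_{S_i}(f)|\,d\mu\le\gz\ell\,\mu(5S_i)\ls\gz\ell\,\mu(S_i)$ by the doubling of $S_i$. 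This produces the local bound $\mu(\{x\in Q_i:\,N(f)(x)>A\ell,\ M^\sharp(f)(x)\le\gz\ell\})\ls\frac{\gz}{A-1}\mu(S_i)$; summing over $i$ and choosing $A$ large and $\gz$ small then makes the total constant as small as required.

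The main obstacle I anticipate is the summation step: passing from these local bounds to a global estimate of the form $\sum_i\mu(S_i)\ls\mu(\Omega_\ell)$. Bounded overlap of the $Q_i$ follows from geometric doubling, but the $S_i$ are doubling enlargements whose radii are not a priori a uniformly bounded multiple of $r_{Q_i}$, since reaching a $(6,\bz_6)$-doubling dilate may require many steps; consequently $\mu(S_i)$ is not obviously comparable to $\mu(Q_i)$, and this is where the non-homogeneity genuinely bites. I expect this to be handled by the $\dz$-coefficient machinery, using bounds such as $\dz(\kz Q_i,S_i)\ls1$ (cf. \cite[Lemma 2.1]{hyy10}) to limit how far one dilates to reach a doubling ball and how much the measure grows along such a chain, in tandem with the bounded-overlap property. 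A secondary point requiring care is the degenerate case $\mu(\cx)<\fz$: here $\Omega_\ell$ may exhaust $\cx$ and there is no genuine boundary at the largest scales, so the hypothesis $\int_\cx f\,d\mu=0$ must be invoked to supply a controlled reference constant at the global scale and ensure the covering argument still closes.
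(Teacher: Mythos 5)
Your second half---deducing the lemma from a good-$\lambda$ inequality by multiplying by $\ell^p$, taking the supremum over $0<\ell<R$, absorbing the term $A^p\ez\,\Phi(R)$ (legitimate precisely because of the finiteness hypothesis), and letting $R\to\fz$---is exactly the paper's proof. The crucial difference is that the paper does \emph{not} prove the good-$\lambda$ inequality: it quotes it from Anh and Duong \cite{ad10}, in the form that for a fixed $\nu\in(0,1)$ and every $\ez>0$ there is $\dz>0$ with $\mu(\{N(f)>(1+\ez)\ell,\,M^\sharp(f)\le\dz\ell\})\le\nu\,\mu(\{N(f)>\ell\})$; choosing $\ez$ with $\nu(1+\ez)^p<1$ gives precisely your quantifier structure $A^p\ez<1$. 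Your proposal instead undertakes to prove the good-$\lambda$ inequality from scratch, and that proof, as written, has genuine gaps, so the proposal is incomplete where the paper's proof is complete.

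Concretely, two steps fail. First, your ``key local observation'' is wrong: $N$ is a supremum over $(6,\bz_6)$-doubling balls \emph{containing} $x$, not centered at $x$, so from $x\in B$ and $y_i\notin B$ one cannot conclude $r_B\le d(x,y_i)$. On $\rr$, the ball $B=(-100.5,\,0.5)$ contains $x=0$, misses $y_i=1$, and has radius $50.5$; a witnessing ball may be huge and still avoid $y_i$ by lying ``on the other side'' of $x$. With this, the containment $B\subset\kz Q_i$, the lower bound $m_B(g_i)>(A-1)\ell$, and the subsequent application of the weak $(1,1)$ bound for $N$ all collapse. (Classically this step is saved by using centered balls \emph{and} the doubling property of $\mu$; neither is available here.) Second, the summation step you flag, $\sum_i\mu(S_i)\ls\mu(\Omega_\ell)$, is not a delegable technicality but the crux of the non-homogeneous difficulty: bounded overlap of the $Q_i$ controls $\sum_i\mu(Q_i)$, whereas the ratio $\mu(S_i)/\mu(\kz Q_i)$ is exactly what a non-doubling measure fails to bound, and the coefficient estimate $\dz(\kz Q_i,S_i)\ls 1$ controls an integral against $1/\lz(c_B,d(\cdot,c_B))$, not a ratio of measures. (A further caveat: Besicovitch-type selections are not available in general geometrically doubling metric spaces, so even the covering must be set up with care.) This is why the good-$\lambda$ inequality in this setting, due to Tolsa \cite{t01ma} for non-doubling measures on $\rn$ and to Anh and Duong \cite{ad10} on metric measure spaces, requires a genuinely more involved argument. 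The efficient repair of your proof is to cite \cite{ad10} for the good-$\lambda$ inequality; your absorption argument then completes the proof exactly as in the paper.
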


\begin{proof}
Recall the $\lz$-good inequality in \cite{ad10} that
for some fixed constant $\nu\in(0, 1)$ and all $\ez\in(0, \fz)$, there exists some $\dz>0$
such that for any $\ell>0$,
$$\mu\lf(\lf\{x\in\cx:\,N(f)(x)>(1+\ez)\ell,\,M^\sharp(f)(x)\le\dz\ell\r\}\r)
\le\nu\mu(\{x\in\cx:\,N(f)(x)>\ell\}).
$$
From this, it then follows that for $R$ large enough and any $\ez>0$,
\begin{align*}
&\sup_{0<\ell<R}\ell^p\mu(\{x\in\cx:\,N(f)(x)>\ell\})\\
&\hs\le\sup_{0<\ell<R}[(1+\ez)\ell]^p\mu(\{x\in\cx:\,N(f)(x)>(1+\ez)\ell\})\\
&\hs\le\nu(1+\ez)^p\sup_{0<\ell<R}\ell^p\mu(\{x\in\cx:\,N(f)(x)>\ell\})\\
&\hs\hs+(1+\ez)^p\sup_{\ell>0}\ell^p\mu\lf(\lf\{x\in\cx:\,M^\sharp(f)(x)>\dz\ell\r\}\r).
\end{align*}
Choosing $\ez$ small enough such that $\nu(1+\ez)^p<1$, our
assumption then implies that
\begin{align*}
\sup_{0<\ell<R}\ell^p\mu(\{x\in\cx:\,N(f)(x)>\ell\})
\ls\sup_{\ell>0}\ell^p\mu\lf(\lf\{x\in\cx:\,M^\sharp(f)(x)>\ell\r\}\r).
\end{align*}
Letting $R\rightarrow\fz$ then leads to the conclusion, which
completes the proof of Lemma \ref{l3.2}.
\end{proof}

\begin{lem}\label{l3.3}
Let $r\in(0,\,1)$ and $N_r(f)\equiv [N(|f|^r)]^{1/r}$. Then for any
$p\in[1,\,\fz)$, there exists a positive constant $C$, depending on
$r$, such that for suitable function $f$ and any $\ell>0$,
$$
\mu(\{x\in\cx:\,N_r(f)(x)>\ell\})\le C\ell^{-p}
\sup_{\tau\ge\ell}\tau^p\mu(\{x\in\cx:\,|f(x)|>\tau\}).
$$
\end{lem}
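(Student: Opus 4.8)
The plan is to reduce the claim to a weak type $(1,1)$ estimate for $N$ applied to the truncation of $|f|^r$ at its natural height. Note first that, by the very definition of $N_r$, for every $\ell>0$ one has the identity
$$
\{x\in\cx:\,N_r(f)(x)>\ell\}=\{x\in\cx:\,N(|f|^r)(x)>\ell^r\},
$$
so it suffices to estimate the $\mu$-measure of the superlevel set of $N(|f|^r)$ at height $\ell^r$. I would then split $f$ at height $\ell$, writing $f=f_0+f_1$ with $f_1\equiv f\chi_{\{|f|>\ell\}}$ and $f_0\equiv f\chi_{\{|f|\le\ell\}}$. Since these have disjoint supports, $|f|^r=|f_0|^r+|f_1|^r$, and the sublinearity of the averaging functional gives the pointwise bound $N(|f|^r)\le N(|f_0|^r)+N(|f_1|^r)$.

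The good part is controlled trivially: because $|f_0|^r\le\ell^r$ $\mu$-almost everywhere, every average of $|f_0|^r$ is at most $\ell^r$, whence $N(|f_0|^r)\le\ell^r$ everywhere. Thus the mass of $\{N(|f|^r)>\ell^r\}$ must be carried by the bad part, and the problem reduces to estimating $\mu(\{N(|f_1|^r)>c\ell^r\})$ for an appropriate constant $c$. For this I would invoke the weak type $(1,1)$ boundedness of $N$ recorded right after the definition of $N$ (the case $p=1$ of the $L^p(\mu)\to L^{p,\,\fz}(\mu)$ bound of \cite[Lemma 2.3]{hlyy}), which yields
$$
\mu\lf(\lf\{x\in\cx:\,N(|f_1|^r)(x)>c\ell^r\r\}\r)\ls\frac{1}{\ell^r}\int_{\{|f|>\ell\}}|f(x)|^r\,d\mu(x).
$$

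The remaining, and decisive, task is to bound $\int_{\{|f|>\ell\}}|f|^r\,d\mu$ by $\sup_{\tau\ge\ell}\tau^p\mu(\{|f|>\tau\})$. Here I would use the layer-cake formula,
$$
\int_{\{|f|>\ell\}}|f(x)|^r\,d\mu(x)=\ell^r\mu(\{|f|>\ell\})+r\int_\ell^\fz\tau^{r-1}\mu(\{x\in\cx:\,|f(x)|>\tau\})\,d\tau,
$$
and then insert the pointwise estimate $\mu(\{|f|>\tau\})\le\tau^{-p}\sup_{\sigma\ge\ell}\sigma^p\mu(\{|f|>\sigma\})$, valid for all $\tau\ge\ell$. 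Writing $A$ for that supremum, the boundary term is $\le A\ell^{r-p}$, while the tail integral becomes $rA\int_\ell^\fz\tau^{r-1-p}\,d\tau$, which converges precisely because $r<1\le p$ forces $r-1-p<-1$, giving $\frac{r}{p-r}A\ell^{r-p}$. Multiplying by the factor $\ell^{-r}$ coming from the weak type estimate produces the desired bound $CA\ell^{-p}$, with $C$ depending on $r$ through the denominator $p-r$ (and through the truncation constant).

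The main obstacle, as the second paragraph already signals, is the matching of the truncation height to the target height $\ell^r$: the good part saturates the bound $\ell^r$ exactly, so the strict gap that makes the weak type estimate usable is not free and must be manufactured with care. This is the only genuinely delicate point; once the separation is arranged so that the bad-part integral still sees only heights $\tau\ge\ell$, the layer-cake computation above is routine, and the convergence of the tail — hence the finiteness of the constant — is exactly the place where the hypothesis $r<1$ (together with $p\ge 1$) is used.
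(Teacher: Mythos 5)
Your overall strategy is exactly the paper's: truncate $f$ at height $\ell$, bound the good part pointwise by $\ell^r$, hit the bad part with a weak-type estimate for $N$, and finish with a layer-cake computation. (The only cosmetic difference is that you invoke the weak $(1,1)$ bound for $N$, while the paper applies the $L^p(\mu)\to L^{p,\,\fz}(\mu)$ bound with the same exponent $p$ to $|f\chi_{\{|f|>\ell\}}|^r$; your tail integral converges because $r<p$, the paper's because $r<1$, and both computations are correct.) The genuine problem is the step you yourself flag and then defer: the reduction of $\mu(\{N(|f|^r)>\ell^r\})$ to $\mu(\{N(|f_1|^r)>c\ell^r\})$. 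Since $N(|f_0|^r)$ can be as large as $\ell^r$, sublinearity only gives $N(|f_1|^r)\ge N(|f|^r)-N(|f_0|^r)>0$ on the set in question, and no inclusion $\{N(|f|^r)>\ell^r\}\subset\{N(|f_1|^r)>c\ell^r\}$ with a positive $c$ follows. This is not a routine detail to be ``manufactured with care'': it is the entire content of the lemma, and your proposal never supplies it.

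Worse, the exact form you hope for --- left-hand threshold exactly $\ell$ together with $\sup_{\tau\ge\ell}$ on the right --- is unattainable, because that statement is false. On $\rr$ with Lebesgue measure (where every ball is $(6,\bz_6)$-doubling, so $N$ is the uncentered maximal operator), take $f=3\ell\chi_{[0,\ez]}+\ell\chi_{(\ez,\,L]}$: for each $x\in(\ez,\,L/2)$ the ball $(0,2x)$ gives an average of $|f|^r$ equal to $\ell^r+\frac{\ez}{2x}[(3\ell)^r-\ell^r]>\ell^r$, so $\mu(\{N_r(f)>\ell\})\ge L/2-\ez$, while $\sup_{\tau\ge\ell}\tau^p\mu(\{|f|>\tau\})=(3\ell)^p\ez$; letting $L\to\fz$ defeats any constant $C$. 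The way out, and what the paper's proof actually establishes, is to create the gap by shifting the threshold on the left: one estimates $\mu(\{N_r(f)>2^{1/r}\ell\})$, so that $N(|f|^r)>2\ell^r$ together with $N(|f_0|^r)\le\ell^r$ forces $N(|f_1|^r)>\ell^r$; after that, your weak-$(1,1)$ plus layer-cake computation runs verbatim over $[\ell,\fz)$ and yields the bound with $\sup_{\tau\ge\ell}$. The factor $2^{1/r}$ is harmless in the only place the lemma is used (the proof of Theorem \ref{t1.1}, where one takes suprema of $\ell^p\mu(\{N_r(\cdot)>\ell\})$ over all $\ell$), and absorbing it is precisely what the paper's closing phrase ``which implies our desired result'' glosses over; the lemma should really be read with this constant built in.
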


\begin{proof}
For each fixed $\ell>0$ and function $f$, decompose $f$ as
$$
f(x)=f(x)\chi_{\{x\in\cx:\,|f(x)|\le\ell\}}(x)
+f(x)\chi_{\{x\in\cx:\,|f(x)|>\ell\}}\equiv f_1(x)+f_2(x).
$$
By the boundedness of $N$ form $L^p(\mu)$ to $L^{p,\,\fz}(\mu)$, we
obtain that
\begin{align*}
\mu(\{x\in\cx:\,N_r(f)(x)>2^{1/r}\ell\})&\le\mu(\{x\in\cx:\,N(|f_2|^r)(x)>\ell^r\})\\
&\ls\ell^{-rp}\int_{\cx}|f_2(x)|^{rp}\,d\mu(x)\\
&\ls\mu(\{x\in\cx:\,|f(x)|>\ell\})\\
&\hs+\ell^{-rp}\int_\ell^\fz
\tau^{rp-1}\mu(\{x\in\cx:\,|f(x)|>\tau\})\,d\tau\\
&\ls\mu(\{x\in\cx:\,|f(x)|>\ell\})\\
&\hs+\ell^{-p} \sup_{\tau>\ell}\tau^p\mu(\{x\in\cx:\,|f(x)|>\tau\}),
\end{align*}
which implies our desired result.
\end{proof}

\begin{proof}[Proof of Theorem \ref{t1.1}]
By the Marcinkiewicz interpolation theorem, we only need to prove
that for all $f\in L^p(\mu)$ with $p\in(1,\,\fz)$ and $\ell>0$,
\begin{equation}\label{e3.1}
\mu(\{x\in\cx:\,|Tf(x)|>\ell\})\ls\ell^{-p}\|f\|^p_{L^p(\mu)}.
\end{equation}
We further consider the following two cases.

{\it Case (i)} $\mu(\cx)=\fz$. Let $L_b^\fz(\mu)$ be the {\it space of bounded functions with
bounded supports} and
$$L^\fz_{b,\,0}(\mu)\equiv\lf\{f\in L_b^\fz(\mu):\,\int_\cx f(x)\,d\mu(x)=0\r\}.$$
Then in this case, $L^\fz_{b,\,0}(\mu)$ is dense in $L^p(\mu)$ for all $p\in(1, \fz)$. Let $r\in(0,\,1)$ and
$N_r(g)\equiv [N(|g|^r)]^{1/r}$ for any $g\in L^r_{\rm loc}(\mu)$. Notice that $|Tf|\le N_r(Tf)$
$\mu$-almost everywhere on $\cx$. Then by a standard density
argument, to prove \eqref{e3.1}, it suffices to prove that for all
$f\in L_{b,\,0}^\fz(\mu)$ and $p\in(1,\,\fz)$,
\begin{equation}\label{e3.2}
\sup_{\ell>0}\ell^p\mu\lf(\lf\{x\in\cx:\,N_r(Tf)(x)>\ell\r\}\r)
\ls\|f\|^p_{L^p(\mu)}.
\end{equation}

For each fixed $\ell>0$, applying Lemma \ref{l2.1}, we obtain that
$f=g+h$, where $h$ is as Lemma \ref{l2.1} and $g\equiv f-h$, such
that
\begin{equation}\label{e3.3}
\|g\|_{L^\fz(\mu)}\ls\ell,\quad h\in H^1(\mu)
\end{equation} and
\begin{equation}\label{e3.4}
\|h\|_{H^1(\mu)}\ls\ell^{1-p}\|f\|^p_{L^p(\mu)}.
\end{equation}
For each $r\in(0,\,1)$, define
$M^\sharp_r(f)\equiv\{M^\sharp(|f|^r)\}^{1/r}$. Then,
\eqref{e3.3} together with the boundedness of $T$ from $L^\fz(\mu)$
to $\rbmo(\mu)$ and Lemma \ref{l3.1} shows that the function
$M^\sharp_r(Tg)$ is bounded by a multiple of $\ell$. Hence, if $c_0$
is a sufficiently large constant, we have
\begin{equation}\label{e3.5}
\lf\{x\in\cx:\,M^\sharp_r(Tg)(x)>c_0\ell\r\}=\emptyset.
\end{equation}
On the other hand, since both $f$ and
$h$ belong to $H^1(\mu)$, we see that $g\in H^1(\mu)$ and
$$\|g\|_{H^1(\mu)}\le \|f\|_{H^1(\mu)}+\|h\|_{H^1(\mu)}\ls \|f\|_{H^1(\mu)}+\ell^{1-p}\|f\|_\lp^p.$$
 By this together with the boundedness of
$T$ from $H^1(\mu)$ to $L^{1,\,\fz}(\mu)$ and Lemma \ref{l3.3}, we
have that for any $p\in(1,\,\fz)$ and $R>0$,
\begin{align*}
\sup_{0<\ell<R}\ell^p\mu\lf(\lf\{x\in\cx:\,N_r(Tg)(x)>\ell\r\}\r)
\ls\sup_{0<\ell<R}\ell^{p-1}
\sup_{\tau\ge\ell}\tau\mu(\{x\in\cx:\,|Tg(x)|>\tau\})<\fz.
\end{align*}
From this, \eqref{e3.5}, Lemma \ref{l3.2} and the fact that
$N_r\circ T$ is quasi-linear, we deduce that there exists a positive
constant $\wz C$ such that
\begin{align}\label{e3.6}
&\sup_{\ell>0}\ell^p\mu\lf(\lf\{x\in\cx:\,N_r(Tf)(x)>\wz Cc_0\ell\r\}\r)\nonumber\\
&\hs\le\sup_{\ell>0}\ell^p\mu\lf(\lf\{x\in\cx:\,N_r(Tg)(x)>c_0\ell\r\}\r)
+\sup_{\ell>0}\ell^p\mu\lf(\lf\{x\in\cx:\,N_r(Th)(x)>c_0\ell\r\}\r)\nonumber\\
&\hs\ls\sup_{\ell>0}\ell^p\mu\lf(\lf\{x\in\cx:\,M^\sharp_r(Tg)(x)>c_0\ell\r\}\r)
+\sup_{\ell>0}\ell^p\mu\lf(\lf\{x\in\cx:\,N_r(Th)(x)>c_0\ell\r\}\r)\nonumber\\
&\hs\ls\sup_{\ell>0}\ell^p\mu\lf(\lf\{x\in\cx:\,N_r(Th)(x)>\ell\r\}\r).
\end{align}
From the boundedness of $N$ from $L^1(\mu)$ to $L^{1,\,\fz}(\mu)$
and the boundedness of $T$ from $H^1(\mu)$ to $L^{1,\,\fz}(\mu)$, it
follows that
\begin{align*}
&\mu(\{x\in\cx:\,N_r(Th)(x)>\ell\})\\
&\hs\le\mu\lf(\lf\{x\in\cx:\,
N\lf(|Th|^r\chi_{\{x\in\cx:\,|(Th)(x)|>\ell/2^{\frac1r}\}}\r)>\frac{\ell^r}{2}\r\}\r)\\
&\hs\ls\ell^{-r}\int_\cx \lf|(Th)(x)
\chi_{\{x\in\cx:\,|(Th)(x)|>\ell/2^{\frac1r}\}}(x)\r|^r\,d\mu(x)\\
&\hs\ls\ell^{-r}\mu\lf(\lf\{x\in\cx:\,|(Th)(x)|>\ell/2^{\frac1r}\r\}\r)
\int_0^{\ell/2^{\frac1r}}s^{r-1}\,ds\\
&\hs\hs+\ell^{-r}\int_{\ell/2^{\frac1r}}^\fz
s^{r-1}\mu(\{x\in\cx:\,|(Th)(x)|>s\})\,ds\\
&\hs\ls\mu\lf(\lf\{x\in\cx:\,|(Th)(x)|>\ell/2^{\frac1r}\r\}\r)
+\dfrac{1}{\ell}\sup_{s\ge\ell/2^{\frac1r}}s\mu(\{x\in\cx:\,|(Th)(x)|>s\})\\
&\hs\ls\dfrac{\|h\|_{H^1(\mu)}}{\ell}\ls\ell^{-p}\|f\|^p_{L^p(\mu)},
\end{align*}
which together with \eqref{e3.6} yields \eqref{e3.2}.

{\it Case (ii)} $\mu(\cx)<\fz$. In this case, assume that $f\in L^\fz_b(\mu).$ Notice that if
$\ell\in(0, \ell_0]$, where $\ell_0$ is as in Lemma \ref{l2.1}, then
\eqref{e3.1} holds trivially. Thus, we only have to consider the
case when $\ell>\ell_0$. Let $r\in(0,\,1)$, $N_r(f)$ be as in Lemma
\ref{l3.3} and $M^\sharp_r$ as in Case (i). For each fixed
$\ell>\ell_0$, applying Lemma \ref{l2.1}, we obtain that $f=g+h$
with $g$ and $h$ satisfying \eqref{e3.3} and \eqref{e3.4}, which
together with the boundedness of $T$ from $L^\fz(\mu)$ to
$\rbmo(\mu)$ and Lemma \ref{l3.1} yields \eqref{e3.5} for
$M^\sharp_r(Tg)$. We now claim that
\begin{equation}\label{e3.7}
{\rm F}\equiv\frac{1}{\mu(\cx)}\dint_\cx|Tg(x)|^r\,d\mu(x)\ls
\ell^r,
\end{equation}
where the constant depends on $\mu(\cx)$ and $r$.
In fact, since $\mu(\cx)<\fz$, we regard $\cx$ as a ball. Then
$g_0\equiv g-\frac1{\mu(\cx)}\int_\cx g(x)\,d\mu(x)\in H^1(\mu)$. On
the other hand, $|T1|^r\in\rbmo(\mu)$ because of the fact that
$T1\in \rbmo(\mu)$ and Lemma \ref{l3.1}. This together with
$\mu(\cx)<\fz$ implies that
$$
\dint_\cx|T1(x)|^r\,d\mu(x)<\fz.
$$
Then by the boundedness of $T$ from $H^1(\mu)$ to $L^{1,\,\fz}(\mu)$
and \eqref{e3.3}, we have
\begin{eqnarray*}
\dint_\cx|Tg(x)|^r\,d\mu(x)
&&\le\dint_\cx\lf\{\lf|Tg_0(x)\r|^r+\lf|T\lf[\frac1{\mu(\cx)}\dint_\cx g(y)\,d\mu(y)\r](x)\r|^r\r\}\,d\mu(x)\\
&&\ls r\dint_0^{\|g_0\|_{H^1(\mu)}/{\mu(\cx)}}t^{r-1}\mu
\lf(\{x\in\cx:\, |Tg_0(x)|>t\}\r)\,dt\\
&&\hs+r\dint_{\|g_0\|_{H^1(\mu)}/{\mu(\cx)}}^\fz t^{r-1}
\mu\lf(\{x\in\cx:\, |Tg_0(x)|>t\}\r)\,dt+\ell^r\\
&&\ls\mu(\cx)\dint_0^{\|g_0\|_{H^1(\mu)}/{\mu(\cx)}}t^{r-1}\,dt+
\|g_0\|_{H^1(\mu)}\dint_{\|g_0\|_{H^1(\mu)}/{\mu(\cx)}}^\fz t^{r-2}\,dt+\ell^r\\
&&\ls [\mu(\cx)]^{1-r}\|g_0\|_{H^1(\mu)}^r+\ell^r \ls \ell^r,
\end{eqnarray*}
which implies \eqref{e3.7}.

Observe that $\int_\cx(|Tg|^r-{\rm F})\,d\mu(x)=0$ and for any
$R>0$,
$$\sup_{0<\ell<R}\ell^p\mu\lf(\lf\{x\in\cx:\,N(|Tg|^r-{\rm F})(x)>\ell\r\}\r)\le R^p\mu(\cx)<\fz.$$
From this together with Lemma \ref{l3.2}, $M^\sharp_r(\rm F)=0$,
\eqref{e3.7} and an argument similar to that used in Case (i), we
conclude that there exists a positive constant $\wz c$ such that
\begin{eqnarray*}
\sup_{\ell>\ell_0}\ell^p\mu\lf(\lf\{x\in\cx:\,N_r(Tf)(x)>\wz cc_0\ell\r\}\r)
&&\le\sup_{\ell>\ell_0}\ell^p\mu\lf(\lf\{x\in\cx:\,N(|Tg|^r-{\rm F})(x)>(c_0\ell)^r\r\}\r)\\
&&\hs+\sup_{\ell>\ell_0}\ell^p\mu\lf(\lf\{x\in\cx:\,N_r(Th)(x)>c_0\ell\r\}\r)\nonumber\\
&&\ls\sup_{\ell>0}\ell^p\mu\lf(\lf\{x\in\cx:\,M^\sharp_r(Tg)(x)>c_0\ell\r\}\r)\\
&&\hs+\sup_{\ell>0}\ell^p\mu\lf(\lf\{x\in\cx:\,N_r(Th)(x)>c_0\ell\r\}\r)\nonumber\\
&&\ls\sup_{\ell>0}\ell^p\mu\lf(\lf\{x\in\cx:\,N_r(Th)(x)>\ell\r\}\r)\ls\|f\|^p_{L^p(\mu)},
\end{eqnarray*}
where in the first inequality we choose $c_0$ large enough such that
${\rm F}\le (c_0\ell)^r$. This completes the proof of Theorem
\ref{t1.1}.

\end{proof}

{\bf Acknowledgments.}
The first author is supported by the Mathematical Tianyuan Youth
Fund (Grant No. 11026120) of National Natural Science Foundation of
China and Chinese Universities Scientific Fund (Grant No.
2011JS043). The second (corresponding) author is supported by
the National Natural Science Foundation
(Grant No. 11101339) of China.

\bigskip

\noindent Haibo Lin

\medskip

\noindent College of Science, China Agricultural University, Beijing
100083, People's Republic of China

\medskip

\noindent{\it E-mail address}: \texttt{haibolincau@126.com}

\bigskip

\noindent Dongyong Yang (Corresponding author)

\medskip

\noindent School of Mathematical Sciences, Xiamen University, Xiamen
361005, People's Republic of China

\medskip

\noindent{\it E-mail address}: \texttt{dyyang@xmu.edu.cn}

\end{document}